\theoremstyle{definition}
 \newtheorem{definition}{Definition}[section]
 \newtheorem{remark}[definition]{Remark}
\theoremstyle{plain}
 \newtheorem{proposition}[definition]{Proposition}
 \newtheorem{theorem}[definition]{Theorem}
 \newtheorem{lemma}[definition]{Lemma}
\author{Hideki Miyachi and Dragomir \v Sari\' c}
\address[Hideki Miyachi]{Division of Mathematical and Physical Sciences,
Graduate School of Natural Science \& Technology,
Kanazawa University,
Kakuma-machi, Kanazawa,
Ishikawa, 920-1192, Japan}
\email{miyachi@se.kanazawa-u.ac.jp}
\address[Dragomir {\v S}ari{\' c}]{Department of Mathematics Queens College of CUNY 65-30 Kissena Blvd. Flushing, NY 11367
USA
and
Mathematics PhD Program The CUNY Graduate Center 365 Fifth Avenue
New York, NY 10016-4309 USA}
\email{Dragomir.Saric@qc.cuny.edu}
\thanks{The first author is partially supported by JSPS KAKENHI Grant Numbers
16K05202,
16H03933,
17H02843.
The second author was partially supported by a Simons Foundation grant,
Grant Number 346391.}
\subjclass[2010]{Primary~30F60, Secondary~30C62, 30L99}
\begin{document}

\date{\today}
\title[Convergence of Teichm\"uller deformations]{Convergence of Teichm\"uller deformations in the universal Teichm\"uller space}

\maketitle

\begin{abstract}
Let $\varphi :\mathbb{D}\to\mathbb{C}$ be an integrable holomorphic function on the unit disk 
$\mathbb{D}$ and  $D_{\varphi}:\mathbb{D}\to T(\mathbb{D})$  the corresponding Teichm\"uller 
disk in the universal Teichm\"uller space $T(\mathbb{D})$. For a positive $t$ it is known that 
$D_{\varphi}(t)\to [\mu_{\varphi}]\in PML_b(\mathbb{D})$ as $t\to 1$, where
$\mu_{\varphi}$ is a bounded measured lamination representing a point on the Thurston boundary of 
$T(\mathbb{D})$. 
We extend this result by showing that 
$D_{\varphi}\colon \mathbb{D}\to T(\mathbb{D})$
extends as a continuous map from the closed disk $\overline{\mathbb{D}}$
to the Thurston bordification. In addition, we prove that the rate of convergence of 
$D_{\varphi}(\lambda )$  when $\lambda\to e^{i\theta}$ is independent of the type of the
approach to $e^{i\theta}\in\partial\mathbb{D}$.
\end{abstract}

\section{Introduction}

Let $\mathbb{D}$ be the unit disk equipped with the hyperbolic metric and $f:\mathbb{D}\to\mathbb{D}$ a quasiconformal map. Then $f$ extends by continuity to a
quasisymmetric map $h:S^1\to S^1$ of the unit circle $S^1$. Conversely, a quasisymmetric map $h:S^1\to S^1$ extends to a quasiconformal map of $\mathbb{D}$ and there are 
infinitely many such extensions (See \cite{GL}, \cite{LV}). 

The {\it universal Teichm\"uller space} $T(\mathbb{D})$ consists of all quasiconformal maps $f:\mathbb{D}\to\mathbb{D}$ up to an equivalence relation (See \cite{L}, \cite{GL}). 
Namely, two quasiconformal maps $f_1,f_2:\mathbb{D}\to\mathbb{D}$ are equivalent if there exists a conformal map $c:\mathbb{D}\to\mathbb{D}$ such that $f_2^{-1}\circ c\circ 
f_1$ extends by continuity to the identity on $S^1$. We will use an equivalent definition (See \cite{L}, \cite{GL}):
$$
T(\mathbb{D})=\{ h:S^1\to S^1:\ h\ \mathrm{ is\ quasisymmetric\ and\ fixes }\ 1,\ i,\ -1\}. 
$$
The Thurston boundary of the universal Teichm\"uller space $T(\mathbb{D})$ is identified with the space $PML_b(\mathbb{D})$ of projective bounded measured laminations on
$\mathbb{D}$
(See \S\ref{sec:Thurston-bordification}. See also \cite{BonSar},  \cite{Sar1}). 
In this paper, we describe the closure of Teichm\"uller disks in the Thurston bordification $T(\mathbb{D})\cup PML_b(\mathbb{D})$ of the universal Teichm\"uller space $T(\mathbb{D})$. In particular, if a sequence in the parameter of the Teichm\"uller disk converges to a point on the unit circle the corresponding sequence in $T(\mathbb{D})$ converges to a unique point in $PML_b(\mathbb{D})$ independently of the type of approach (e.g. along a geodesic, along a horocycle or even outside any horoball).

In previous works \cite{HaSar1}, \cite{HaSar2}, \cite{HaSar3}, Hakobyan and the second author of this paper showed that for an integrable holomorphic quadratic differential $\varphi$ on $\mathbb{D}$, the corresponding Teichm\"uller geodesic of $T(\mathbb{D})$ has a unique limit point on Thurston boundary of $T(\mathbb{D})$. The limit point $[\mu_{\varphi}]\in PML_b(\mathbb{D})$ is the projective class of the transverse measure to the geodesic straightening  of the vertical foliation of $\varphi$
multiplied by the reciprocal of the length of the vertical leaves (See Remark \ref{remark:1}).
For an integrable holomorphic quadratic differential $\varphi$ on $\mathbb{D}$,
the {\it Teichm\"uller disk} is a holomorphic disk in $T(\mathbb{D})$
given by the family of Beltrami differentials
$\{\lambda \overline{\varphi}/|\varphi|\}_{\lambda\in \mathbb{D}}$.
Let $D_\varphi(\lambda)$ be the Teichm\"uller  equivalence class 
associated to the Beltrami differential $\lambda \overline{\varphi}/|\varphi|$.
 We will prove the following,
 which is a generalization of a result in \cite{HaSar1}.

\begin{theorem}[Teichm\"uller deformation has the limit]
\label{thm:Teich-has-limit}
As $\lambda\to e^{i\theta}\in \partial \mathbb{D}=S^1$, $D_\varphi(\lambda)$ converges to the projective class of $\mu_{e^{-i\theta}\varphi}$ in the Thurston boundary $PML_b(\mathbb{D})$ of $T(\mathbb{D})$. Furthermore, the Teichm\"uller disk $D_\varphi\colon \mathbb{D}\to T(\mathbb{D})$ is extended as a homeomorphism from the closed unit disk $\overline{\mathbb{D}}$ onto the image in the Thurston bordification $T(\mathbb{D})\cup PML_b(\mathbb{D})$.
\end{theorem}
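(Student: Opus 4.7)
The plan is to reduce the convergence statement to the known radial result via rotational symmetry, to upgrade to arbitrary approaches by a uniformity argument in the phase $\theta$, and to deduce the homeomorphism from injectivity together with compactness of $\overline{\mathbb{D}}$. Writing $\lambda=te^{i\theta}$ with $t\in[0,1)$, the identity
\[
te^{i\theta}\frac{\overline{\varphi}}{|\varphi|}=t\,\frac{\overline{e^{-i\theta}\varphi}}{|e^{-i\theta}\varphi|}
\]
immediately gives $D_\varphi(te^{i\theta})=D_{e^{-i\theta}\varphi}(t)$. Hence each radius of $\mathbb{D}$ is sent to the Teichm\"uller ray generated by the rotated differential $e^{-i\theta}\varphi$, and the Hakobyan--\v{S}ari\'{c} result applied to $e^{-i\theta}\varphi$ already supplies radial convergence $D_\varphi(te^{i\theta})\to [\mu_{e^{-i\theta}\varphi}]$ as $t\to 1^-$.

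To handle an arbitrary sequence $\lambda_n=t_ne^{i\theta_n}\to e^{i\theta_0}$, I intend to split the convergence $D_\varphi(\lambda_n)=D_{e^{-i\theta_n}\varphi}(t_n)\to [\mu_{e^{-i\theta_0}\varphi}]$ into two pieces: first, continuity of the map $\theta\mapsto [\mu_{e^{-i\theta}\varphi}]$ into $PML_b(\mathbb{D})$, which should follow from continuous dependence of the vertical foliation and its geodesic straightening on $\varphi$ in the $L^1$ norm; second, uniformity in $\theta$, over compact neighborhoods of $\theta_0$, of the radial convergence $D_{e^{-i\theta}\varphi}(t)\to [\mu_{e^{-i\theta}\varphi}]$ as $t\to 1^-$. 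The second ingredient is the technical heart of the argument: it requires tracking the constants in the length and intersection number estimates of \cite{HaSar1}--\cite{HaSar3} and verifying that they can be chosen uniformly across the compact family $\{e^{-i\theta}\varphi\}_{\theta}$. Once that uniformity is in hand, the radial parameter $t$ alone governs the escape rate to $PML_b(\mathbb{D})$, while the angular variation contributes only a lower-order correction; this is precisely the rate-of-convergence statement promised in the abstract.

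For the homeomorphism claim, the restriction $D_\varphi\colon\mathbb{D}\to T(\mathbb{D})$ is already a holomorphic embedding, and the boundary values $\{[\mu_{e^{-i\theta}\varphi}]\}_{\theta\in[0,2\pi)}$ are pairwise distinct in $PML_b(\mathbb{D})$: replacing $\varphi$ by $e^{-i\theta}\varphi$ rotates the vertical foliation by $\theta/2$, so the corresponding geodesic laminations (which are unoriented) are genuinely distinct for distinct $\theta\in[0,2\pi)$, and no positive scaling can identify them. The resulting continuous injection $\overline{D_\varphi}\colon\overline{\mathbb{D}}\to T(\mathbb{D})\cup PML_b(\mathbb{D})$ from a compact source into a Hausdorff target is then automatically a homeomorphism onto its image. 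The principal obstacle, as indicated, is securing the uniform version of the radial estimate across the phase family $\{e^{-i\theta}\varphi\}_\theta$.
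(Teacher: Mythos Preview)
Your strategy differs from the paper's in a substantive way. You encode a non-radial approach $\lambda_n=t_ne^{i\theta_n}\to e^{i\theta_0}$ through the varying phase $\theta_n$ and are then forced to prove that the radial convergence $D_{e^{-i\theta}\varphi}(t)\to[\mu_{e^{-i\theta}\varphi}]$ holds \emph{uniformly} in $\theta$ near $\theta_0$; you correctly flag this uniformity as the principal obstacle. The paper instead passes to the right half-plane via $\lambda=\frac{1-(s+ti)}{1+(s+ti)}$, so that every approach to the single point $-1$ becomes $s+|t|\to\infty$, and proves directly that $\frac{s}{s^2+t^2}\,\mathrm{mod}(f_{s+ti}(\Gamma_B))\to\mu_{-\varphi}(B)$ for every box $B$ (Theorem~\ref{thm:rephrase-main-theorem}). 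The non-radial component is now the affine shear parameter $t$ in the map $z\mapsto\frac{1}{s}x-\frac{t}{s}y+yi$, and the new ingredient is an elementary modulus computation for the parallelogram $f_{s+ti}(R)$ (Lemma~\ref{lem:rectangle} and Lemma~\ref{lem:one_rectangle_approx}), from which the lower bound of Proposition~\ref{prop:lower} is assembled by approximating each horizontal strip with step functions and Lusin's theorem. A single rotation then handles every boundary point, and continuity of $\theta\mapsto[\mu_{e^{-i\theta}\varphi}]$ is obtained \emph{a posteriori} by a diagonal argument rather than as an input.

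Your plan is coherent in outline, but the uniformity you need is not merely a matter of tracking constants in \cite{HaSar1}: as $\theta$ varies, the strip decomposition of the vertical foliation, the trajectory lengths $l(z)$ appearing in the transverse measure, and the curve families $\Gamma_B^{<\delta}$ all change qualitatively (e.g.\ at directions admitting saddle connections), so both the Keith upper-semicontinuity step and the Beurling lower bound would require a genuinely new uniform argument rather than bookkeeping. The paper's half-plane parametrization trades that analytic difficulty for a concrete Euclidean estimate on a single parallelogram, which is why the authors describe Proposition~\ref{prop:lower} as their main contribution. Your injectivity sketch on $S^1$ is essentially right in spirit, though the paper simply invokes \cite[Theorem~2]{HaSar1}.
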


\begin{remark}[Limiting measured laminations] \label{remark:1}
Almost all vertical leaves of $e^{-i\theta}\varphi$ have exactly two limit points on $S^1$ (See \cite{Strebel}). The measured lamination $\mu_{e^{-i\theta}\varphi}$ has support on the geodesic lamination of $\mathbb{D}$ obtained by replacing the (vertical) leaves of $e^{-i\theta}\varphi$ with geodesics (for the hyperbolic metric) of $\mathbb{D}$ that have the same endpoints as vertical leaves. The transverse measure of $\mu_{e^{-i\theta}\varphi}$ is given by the integral 
$$
\int_I \frac{1}{l(z)}dx
$$ 
where $l(z)$ is the length of the vertical trajectory through $z=x+yi\in I$ and $dx$ is the differential of the horizontal displacement  in the natural parameter $z=x+yi$ of $e^{-i\theta}\varphi$.  
Notice that (unlike in the case of closed surfaces) it is not simply the horizontal transverse measure induced by $\varphi$. In fact, almost all vertical leaves of $e^{-i\theta}\varphi$ have finite length (in the metric $|\sqrt{e^{-i\theta}\varphi (\zeta )}d\zeta|$)
and the transverse measure is scaled by the reciprocal of the length of the vertical leaves thus giving us a new type of limit when compared to closed surfaces (See \cite{HaSar1}). 
\end{remark}

Theorem \ref{thm:Teich-has-limit} suggests that the behavior of the Teichm\"uller disks
in $T(\mathbb{D})$ is ``tame'' when compared to  the behavior of those in the finite dimensional Teichm\"uller space.
Namely,
when $S$ is a closed Riemann surface of genus at least two, Masur \cite{Mas} proved that the Teichm\"uller geodesics in $T(S)$ corresponding to holomorphic quadratic differentials with uniquely ergodic vertical foliations have a unique limit point on the Thurston boundary of $T(S)$. However, when the vertical foliation of a holomorphic quadratic differential on $S$ is not  uniquely ergodic the corresponding Teichm\"uller geodesic can have more than one limit point on the Thurston boundary of $T(S)$ (See \cite{CMW}, \cite{LLR}, \cite{Len}). 
As related topic,
for a closed Riemann surface $S$ of genus at least two and a holomorphic quadratic differential on $S$ whose vertical foliation is uniquely ergodic,
Jiang-Su \cite{JiangSu} and Alberge \cite{Alberge} proved that the corresponding horocyclic path has a unique limit point on the the Thurston boundary of $T(S)$ which is the projective class of the vertical foliation.
However,
to the authors knowledge,
the other cases (e.g. the convergence to $S^1$ outside any horoball based at $S^1$)
are still not understood for compact surfaces.

A geodesic current on $\mathbb{D}$ is a positive Radon measure on the space of geodesics of $\mathbb{D}$ (See \cite{Bon}, \cite{BonSar}, \cite{Sar1}. See also $\S$ 2). 
The Liouville map $\mathcal{L}$ maps the universal Teichm\"uller space $T(\mathbb{D})$ into the space of geodesic currents by taking the pull-back of the Liouville measure $L$ on the space of geodesics of $\mathbb{D}$ (See \cite{Bon}, \cite{BonSar}, \cite{Sar1}. See \S 2). The universal Teichm\"uller space $T(\mathbb{D})$ is homeomorphic to its image $\mathcal{L}(T(\mathbb{D}))$ inside the space of geodesic currents (See \cite{Bon} and \cite{BonSar}). 

By definition, the Thurston boundary of the universal Teichm\"uller space $T(\mathbb{D})$  consists of  limit points in the space of projective geodesic currents of the projectivization of $\mathcal{L}(T(\mathbb{D}))$. Let $h_{\lambda}:S^1\to S^1$ be the quasisymmetric map fixing $1$, $i$ and $-1$ that represents the Teichm\"uller deformation $D_{\varphi}(\lambda )$. Since $\mathcal{L}(h_{\lambda})\to\infty$ in the space of geodesic currents as $\lambda\to e^{i\theta}$, it is natural to consider the rate of convergence to the infinity.

\begin{theorem}[Asymptotics of Teichm\"uller deformation]
\label{thm:Teich-asymptotics}
Let $h_{\lambda}:S^1\to S^1$ be the quasisymmetric map that represents 
$D_\varphi(\lambda)$ in $T(\mathbb{D})$. Then, as $\lambda\to e^{i\theta}$,
$$ 
\frac{1-|\lambda |}{2\pi}\mathcal{L}(h_{\lambda})\to\mu_{e^{-i\theta}\varphi}.
$$
\end{theorem}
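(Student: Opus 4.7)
The plan is to reduce the limit to the radial case $\lambda=r\to 1^-$ using rotational symmetry of the parametrization and then to carry out a direct asymptotic computation in the natural parameter of the quadratic differential. Writing $\lambda=re^{i\alpha}$, the Beltrami coefficient factors as
$$\lambda\,\frac{\overline{\varphi}}{|\varphi|}\,=\,r\,\frac{\overline{\psi}}{|\psi|}\qquad\text{with}\qquad \psi:=e^{-i\alpha}\varphi,$$
so that $D_\varphi(\lambda)=D_\psi(r)$ and $h_\lambda$ is the quasisymmetric map $h_r^\psi$ of the radial Teichm\"uller ray associated to $\psi$. The theorem therefore amounts to the convergence
$$\frac{1-r}{2\pi}\mathcal{L}(h_r^\psi)\longrightarrow \mu_\psi$$
in the space of geodesic currents, uniformly in $\psi=e^{-i\alpha}\varphi$ for $\alpha$ in a neighborhood of $\theta$; the continuity of $\alpha\mapsto \mu_{e^{-i\alpha}\varphi}$ then delivers the general statement regardless of the approach of $\lambda$ to $e^{i\theta}$.

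For the radial case, test $\mathcal{L}(h_r^\psi)$ against a continuous compactly supported function $F$ on the space of geodesics of $\mathbb{D}$. In the natural parameter $\zeta=x+iy$ of $\psi$, the Teichm\"uller map $f_r$ with Beltrami $r\,\overline{\psi}/|\psi|$ is affine, sending $(x,y)$ to $(\sqrt{K}x,\,y/\sqrt{K})$ where $K=(1+r)/(1-r)$. Thus a generic geodesic of $\mathbb{D}$ is carried by $f_r$ to a curve that stays within $O(1/K)$ of a vertical leaf of $\psi$. Using $dL=|da\,db|/|a-b|^2$ for the Liouville measure and pushing forward through the boundary map, one localizes to a transverse arc $I$ in the natural parameter disjoint from the zeros of $\psi$ and obtains an expansion
$$\mathcal{L}(h_r^\psi)(F)\,=\,\frac{2\pi}{1-r}\int_I F_\psi(x)\,\frac{dx}{l(x)}\,+\,o\!\left(\tfrac{1}{1-r}\right)\qquad (r\to 1^-),$$
where $F_\psi(x)$ denotes the value of $F$ on the geodesic having the same endpoints as the vertical leaf of $\psi$ through $x$ and $l(x)$ is the $\psi$-length of that leaf. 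The prefactor $1/(1-r)\sim K/2$ records the horizontal stretching $\sqrt{K}$ entering quadratically through the two endpoints appearing in the cross-ratio density of $dL$, while the $1/l(x)$ weighting is precisely the transverse density of Remark \ref{remark:1}. Rescaling by $(1-r)/(2\pi)$ produces the required limit.

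The main obstacle is the justification of this localization. Geodesics that approach a zero of $\psi$, where the natural parameter ceases to be single-valued, and geodesics with nearby endpoints, where the cross-ratio density $|a-b|^{-2}$ is large, must both be controlled. The remedy is to exhaust $\mathbb{D}$ by an increasing family of $\psi$-rectangles $R_n$ whose complements have $\psi$-area tending to zero, and to use the integrability of $\varphi$ to show that the Liouville mass of the excluded geodesics is $o(1/(1-r))$ uniformly in $r$ near $1$; a parallel estimate controls the $\mu_\psi$-mass of the same excluded set. The uniformity in $\alpha$ required for the first step is then automatic because the natural parameter of $\psi$, the functions $|\psi|$ and $l$, and the exhausting rectangles all depend continuously on $\alpha$, while the zero set of $\varphi$ is unaffected by the rotation $\varphi\mapsto e^{-i\alpha}\varphi$.
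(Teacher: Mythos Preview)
Your strategy differs from the paper's. You try to reduce the general limit $\lambda\to e^{i\theta}$ to the radial (Teichm\"uller-geodesic) limit $r\to 1^-$ together with a uniformity in the rotation parameter $\alpha$, whereas the paper works directly with a two-parameter family in the right half-plane $\mathbb{H}_{>0}\ni s+ti$ via the Cayley transform $\lambda=\frac{1-z}{1+z}$ and proves, through modulus-of-curve-family estimates (Propositions~\ref{prop:upper} and~\ref{prop:lower}, combined with Lemma~\ref{lem:mod_liouville_measure}), that
\[
\frac{s}{s^2+t^2}\,\mathrm{mod}\bigl(f_{s+ti}(\Gamma_{B})\bigr)\longrightarrow\mu_{-\varphi}(B)
\]
as $s+|t|\to\infty$. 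This covers all modes of approach to the boundary point at once; the asymptotic $\frac{s^2+t^2}{s}\sim\frac{2}{1-|\lambda|}$ then gives the stated normalization. No uniformity-in-$\alpha$ argument is needed.

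Your reduction would be valid if the uniformity you assert could be proved, but the claim that it is ``automatic'' is a genuine gap. Continuous dependence on $\alpha$ of the natural parameter, of $|\psi|=|\varphi|$, and of the zero set is clear; however, the vertical-length function $l(z)$, the decomposition of $\mathbb{D}$ into vertical strips for $e^{-i\alpha}\varphi$, and your exhausting ``$\psi$-rectangles'' all vary with $\alpha$ in a way your sketch does not control. Nothing you wrote explains why the $o\bigl(\tfrac{1}{1-r}\bigr)$ error is uniform in $\alpha$ near $\theta$. The paper is explicit that the radial lower bound ``follows essentially by Beurling's criteria'' while the arbitrary-approach case ``is more substantial'' and constitutes its main contribution (Lemmas~\ref{lem:rectangle} and~\ref{lem:one_rectangle_approx}): the geometric analysis of the parallelograms $f_{s+ti}(R)$ for $t\neq 0$ is exactly what replaces the uniformity you are taking for granted.

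A secondary issue: your expansion $\mathcal{L}(h_r^\psi)(F)=\frac{2\pi}{1-r}\int_I F_\psi(x)\,\frac{dx}{l(x)}+o\bigl(\tfrac{1}{1-r}\bigr)$ is a heuristic, not a derivation. The radial case is established rigorously in \cite{HaSar1} via the modulus/Liouville comparison (Lemma~\ref{lem:mod_liouville_measure} here), not by a direct cross-ratio computation; the phrase about ``$\sqrt{K}$ entering quadratically through the two endpoints'' does not substitute for that argument.
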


\subsection*{Acknowledgement}
The authors thank the referee for his/her fruitful comments and careful reading.

\section{Thurston boundary of the universal Teichm\"uller space}
\label{sec:Thurston-bordification}
Denote by $\mathbb{D}=\{ z:|z|<1\}$ the unit disk equipped with the hyperbolic metric $\frac{2|dz|}{1-|z|^2}$. Each oriented hyperbolic geodesic is uniquely determined by an ordered pair of distinct endpoints, the initial point and  end point of the geodesic. 
Therefore, the space of all oriented (hyperbolic) geodesics of $\mathbb{D}$ is identified with $(S^1\times S^1)\setminus diag$, where $diag$ is the diagonal of $S^1\times S^1$
and $S^1=\partial \mathbb{D}$. 
The space of geodesics contains a unique (up to a positive multiple) positive Borel measure of full support which is invariant under the isometries of $\mathbb{D}$, called the {\it Liouville measure}. It is defined by
$$
L(A)=\int_A\frac{d\alpha d\beta}{|e^{i\alpha}-e^{i\beta}|^2}
$$
for any Borel set $A\subset (S^1\times S^1)-diag$.  For a {\it box of geodesics} $[a,b]\times [c,d]$, with $a,b,c,d\in S^1$ given in the counterclockwise order, the Liouville measure is given by (See Bonahon \cite{Bon})
$$
L([a,b]\times [c,d])=\log\frac{(c-a)(d-b)}{(d-a)(c-b)}.
$$

The universal Teichm\"uller space $T(\mathbb{D})$ consists of all quasisymmetric maps $h:S^1\to S^1$ that fix $1$, $i$ and $-1$. 
A {\it geodesic current} on $\mathbb{D}$ is a positive Radon measure on the space of geodesics $(S^1\times S^1)-diag$.  Let $\mathcal{G}(\mathbb{D})$ be the space of geodesic currents on $\mathbb{D}$.
Bonahon \cite{Bon} introduced an embedding of the Teichm\"uller space $T(S)$ into the space of geodesic currents equipped with the weak* topology via the Liouville map, where $S$ is a closed surface of genus at least two. Moreover, $T(\mathbb{D})$ and $T(X)$, for $X$ any Riemann surface, embeds into the space of geodesic currents when equipped with the uniform weak* topology (See \cite{BonSar}, \cite{Sar1}). The {\it Liouville map} 
$$
\mathcal{L}:T(\mathbb{D})\to \mathcal{G}(\mathbb{D})
$$ 
is defined by
$$
\mathcal{L}(h)=(h^{-1})_{*}(L)
$$
where $h:S^1\to S^1$ is quasisymmetric and $(h^{-1})_{*}(L)$ is the push-forward of the Liouville measure $L$ by $h^{-1}$ (i.e. the pull-back of $L$ by $h$).

By definition, the Thurston boundary of $T(\mathbb{D})$ is the set of all boundary points of the image of $\mathcal{L}(T(\mathbb{D}))$ in the projective geodesic currents $P\mathcal{G}(\mathbb{D})$ equipped with the quotient of the uniform weak* topology. It turns out that the Thurston boundary consists of all projective bounded measured laminations $PML_b(\mathbb{D})$, where the projective class of a geodesic current $[\beta]\in P\mathcal{G}(\mathbb{D})$ is said to be a \emph{projective measured lamination} if the support of $\beta$ consists of non-intersecting geodesics. A projective measured lamination $[\beta]$ is said to be \emph{bounded} if $\sup_I \beta(I)$ is bounded where $I$ runs all transverse geodesic arc of unit length (See \cite{BonSar}, \cite{Sar1}). 

Given an integrable holomorphic function $\varphi :\mathbb{D}\to \mathbb{C}$, the corresponding Teichm\"uller geodesic is given by the equivalence class of quasiconformal maps
$$
g_t(z)=x+\frac{1-t}{1+t}yi
$$
where $0\leq t< 1$ and $z=\int_{*}\sqrt{\varphi (\zeta )}d\zeta $ is the natural parameter for $\mathbb{D}$ defined by $\varphi$.
Let $k_t:S^1\to S^1$ be a quasisymmetric map representing the equivalence class of $g_t$. 
Notice that the maximal dilatation $K(g_t)$ of $g_t$ is equal to $\frac{1+t}{1-t}$
when $0\leq t< 1$.

Let $\mu_{\varphi}$ be measured lamination on $\mathbb{D}$ whose support is the closure of the set of (hyperbolic) geodesics which have the same endpoints on $S^1$ as vertical trajectories of $\varphi$, i.e the leaves of $\mu_{\varphi}$ are obtained by straightening the vertical trajectories of $\varphi$. Since almost all vertical trajectories have two distinct limit points on $S^1$ (See \cite{Strebel}), it follows that to almost every vertical trajectory there corresponds a unique hyperbolic geodesic in the support of  $\mu_{\varphi}$. 

We define the $\mu_{\varphi}$-measure of a box of geodesics $[a,b]\times [c,d]$ as follows. If no vertical  trajectories have one endpoint in $[a,b]$ and another endpoint in $[c,d]$ then $\mu_{\varphi}([a,b]\times [c,d])=0$. In general, consider the set of all vertical trajectories that have one endpoint in $[a,b]$ and another endpoint in $[c,d]$. Let $\{ J_k\}_k$ be at most countable collection of compact subarcs of the  horizontal arcs of $\varphi$ such that every vertical trajectory with one endpoint in $[a,b]$ and another endpoint in $[c,d]$ intersects exactly one $J_k$, and no other vertical trajectory of $\varphi$ intersects any $J_k$. We define
$$
\mu_{\varphi}([a,b]\times [c,d]):=\sum_{k=1}^{\infty}\int_{J_k}\frac{\left|{\rm Re}(\sqrt{\varphi (\zeta )}d\zeta )\right|}{l(\zeta)}=\sum_k\int_{J_k'}\frac{|dx|}{l(z)}
 $$
 where $J_k'$ is the image of $J_k$ under the canonical coordinates $z=x+ti$ corresponding to $\varphi$ and $l(\zeta )$ is the $\varphi$-length of the vertical trajectory through $\zeta$, and similar for $l(z)$.

Then (See \cite{HaSar1})
$$
\lim_{t\to 1} K(g_t)^{-1}\mathcal{L}(k_t)= \mu_{\varphi}
$$
in the weak* topology on the geodesic currents $\mathcal{G}$, where $\mu_{\varphi}$ is the above measured lamination of $\mathbb{D}$. This implies that $\mu_\varphi$ is bounded. In general, an example showed that the above convergence does not hold for the uniform weak* topology (See \cite{HaSar1}).

\section{Modulus of a curve family}

Let $\Gamma$ be a family of curves in $\mathbb{C}$. A metric $\rho (z)|dz|$, where $\rho (z)\geq 0$ and measurable, is said to be {\it allowable for} $\Gamma$ if for every $\gamma\in\Gamma$ we have
$$
\int_{\gamma}\rho (z)|dz|\geq 1.
$$

The {\it modulus} of a curve family $\Gamma$ is given by
$$
\mathrm{mod}(\Gamma )=\inf_{\rho }\iint_{\mathbb{C}}\rho^2(z)dxdy
$$
where the infimum is over all allowable metrics $\rho (z)|dz|$ for the curve family $\Gamma$. 

The following properties of the modulus of families of curves are standard:
\begin{itemize}
  \item[1.] If $\Gamma_1\subset\Gamma_2$ then $\mathrm{mod} (\Gamma_1)\leq
      \mathrm{mod} (\Gamma_2)$.
  \item[2.]  $\mathrm{mod} (\bigcup_{i=1}^{\infty} \Gamma_i) \leq
      \sum_{i=1}^{\infty}\mathrm{mod} (\Gamma_i)$. The equality holds if $\Gamma_i$ and $\Gamma_j$ ($i\ne j$) are contained in mutually disjoint domains.
  \item[3.] If every $\gamma_2\in\Gamma_2$ contains some $\gamma_1\in\Gamma_1$ as a subcurve then $\mathrm{mod} (\Gamma_1 )\geq \mathrm{mod} 
      (\Gamma_2)$.
\end{itemize}

\section{Liouville measure of boxes and modulus of curves}

In \cite{HaSar2}, Hakobyan and
the second author observed that the Liouville measure and the modulus of a curve family are asymptotic to each other.
 
 \begin{lemma}[See \cite{HaSar2}]
\label{lem:mod_liouville_measure} Let $(a,b,c,d)$ be a quadruple of points on
$\mathbb{S}^1$ in the counterclockwise order. Let $\Gamma_{[a,b]\times
[c,d]}$ consist of all differentiable curves $\gamma$ in $\mathbb{D}$ which
connect $[a,b]\subset \mathbb{S}^1$ with $[c,d]\subset \mathbb{S}^1$. Then
$$
\mathrm{mod}(\Gamma_{[a,b]\times [c,d]})-\frac{1}{\pi}{L}([a,b]\times [c,d])-\frac{2}{\pi}\log 4\to 0
$$
as $\mathrm{mod}(\Gamma_{[a,b]\times [c,d]})\to\infty$, where ${L}$ is the
Liouville measure.
\end{lemma}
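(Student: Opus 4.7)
The plan is to use conformal invariance to reduce to a one-parameter family and then evaluate both sides explicitly using the classical uniformization of a conformal quadrilateral by a rectangle.

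Since M\"obius automorphisms of $\mathbb{D}$ preserve modulus of curve families and also preserve the Liouville measure on the space of geodesics, and since they act triply transitively on $S^{1}$, the quadruple $(a,b,c,d)$ is determined up to automorphism by its cross-ratio. I would therefore first pass to the upper half-plane $\mathbb{H}$ by a M\"obius transformation sending $(a,b,c,d)$ to the points $-r,-1,1,r$ on $\mathbb{R}$ for some $r>1$. A direct evaluation of the cross-ratio formula for $L$ recalled in Section~2 gives
$$
L([a,b]\times[c,d])=\log\frac{(r+1)^{2}}{4r}=\log r-\log 4+O(1/r).
$$

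Next, the modulus of $\Gamma_{[-r,-1]\times[1,r]}$ in $\mathbb{H}$ is obtained by uniformizing to a rectangle. Set $k=1/r$. The Schwarz--Christoffel map defined by the incomplete elliptic integral of the first kind with modulus $k$ sends $\mathbb{H}$, with marked boundary points $-1/k,-1,1,1/k$, conformally onto the rectangle $[-K(k),K(k)]\times[0,K'(k)]$, where $K$ and $K'$ are the complete elliptic integrals of moduli $k$ and $k'=\sqrt{1-k^2}$. The two short boundary arcs $[-1/k,-1]$ and $[1,1/k]$ are carried to the two vertical sides of height $K'(k)$. A one-line length--area argument (with the extremal density being the constant $1/(2K(k))$) then gives
$$
\mathrm{mod}(\Gamma_{[a,b]\times[c,d]})=\frac{K'(k)}{2K(k)}.
$$

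The hypothesis $\mathrm{mod}\to\infty$ is equivalent to $k\to 0^{+}$, i.e.\ $r\to\infty$. Invoking the classical asymptotics $K(k)=\tfrac{\pi}{2}+O(k^{2})$ and $K'(k)=\log(4/k)+O(k^{2}\log(1/k))$, I obtain
$$
\mathrm{mod}(\Gamma_{[a,b]\times[c,d]})=\frac{1}{\pi}\log(4r)+o(1),
$$
and subtracting $\tfrac{1}{\pi}L+\tfrac{2}{\pi}\log 4$ makes the $\log r$ terms cancel and the constants collapse, yielding the claim. The only delicate step is the asymptotic of $K'(k)$ near $k=0$: the $\log 4$ appearing in that expansion is precisely what pins down the additive constant $\tfrac{2}{\pi}\log 4$ in the lemma. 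I would either cite this classical expansion or derive it via the AGM formula for $K$ combined with Landen's transformation. Everything else is conformal invariance plus elementary computation.
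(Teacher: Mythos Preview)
Your argument is correct. The reduction by M\"obius invariance to the symmetric configuration $(-r,-1,1,r)$ on $\partial\mathbb{H}$, the Schwarz--Christoffel uniformization giving $\mathrm{mod}=K'(k)/(2K(k))$ with $k=1/r$, and the classical expansion $K'(k)=\log(4/k)+o(1)$ together yield exactly the stated asymptotic, and your bookkeeping of the constants is right: the two copies of $\tfrac{1}{\pi}\log 4$ coming from $\mathrm{mod}$ and from $L$ combine to cancel against $\tfrac{2}{\pi}\log 4$.

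There is nothing to compare against in this paper: the lemma is quoted from \cite{HaSar2} and is not proved here. Your write-up therefore supplies a self-contained proof where the paper only gives a citation. The elliptic-integral route you take is the natural one and is essentially the classical computation of the extremal distance in a quadrilateral; the only point worth tightening in a final version is to state clearly that $\mathrm{mod}(\Gamma_{[a,b]\times[c,d]})\to\infty$ is equivalent to the cross-ratio tending to infinity (equivalently $r\to\infty$, $k\to 0$), since the lemma is phrased as a limit in $\mathrm{mod}$ rather than in the configuration. One minor cosmetic remark: the ``one-line length--area argument'' for the rectangle is indeed Beurling's extremal metric $\rho\equiv 1/(2K)$, but since the rectangle case is the textbook example you could simply cite it rather than rederive it.
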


If $\mathrm{mod} (h_t(\Gamma_{[a,b]\times [c,d]}))\to \infty$ as $t\to\infty$, then Lemma \ref{lem:mod_liouville_measure} implies
\begin{equation}
\label{eq:mod/liou}
\lim_{t\to\infty}\frac{\mathrm{mod}( h_t(\Gamma_{[a,b]\times [c,d]}))}{\mathcal{L}(h_t)([a,b]\times [c,d])}\to\frac{1}{\pi}.
\end{equation}

Moreover, $\mathrm{mod} (h_t(\Gamma_{[a,b]\times [c,d]}))$ is bounded if and only if $\mathcal{L}(h_t)([a,b]\times [c,d])$ is bounded. 

\section{Proof of the Theorems}

\subsection{Convergence of Liouville measures}

For $s+ti\in \mathbb{H}_{>0}=\{w\in \mathbb{C}\mid {\rm Re}(w)>0\}$,
we consider quasiconformal mappings $f_{s+ti}:\mathbb{D}\to \mathbb{D}$ 
with  
Beltrami coefficients
$$
\frac{1-(s+ti)}{1+(s+ti)}\frac{\overline{\varphi}}{|\varphi|}.
$$
In the natural parameter $z=x+yi=\int_{*}\sqrt{\varphi(\zeta )}d\zeta$,
the corresponding quasiconformal mappings are 
$$
f_{s+ti}(z)=\frac{1}{s}x-\frac{t}{s}y+yi
$$ 
for $s+ti\in \mathbb{H}_{>0}$. Define
$$
D^{*}_{\varphi}:\mathbb{H}_{>0}\to T(\mathbb{D});\ \ \ D_{\varphi}^{*}(s+ti)=\Big{[}\frac{1-(s+ti)}{1+(s+ti)}\frac{\overline{\varphi}}{|\varphi|}\Big{]}.
$$
 We will prove

\begin{theorem}
\label{thm:rephrase-main-theorem}
For any box of geodesics $[a,b]\times [c,d]$, 
$$
\frac{s}{s^2+t^2}\mathrm{mod}(f_{s+ti}(\Gamma_{[a,b]\times [c,d]}))\to \mu_{-\varphi}([a,b]\times [c,d])
$$
as $s+|t|\to \infty$, where $\Gamma_{[a,b]\times [c,d]}$ is the set of arcs connecting $[a,b]$ and $[c,d]$ in $\mathbb{D}$.
\end{theorem}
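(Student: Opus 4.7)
The plan is to work in the $\varphi$-natural parameter $z=x+yi$, where $f_{s+ti}$ becomes the real-affine map $\xi+\eta i=(x-ty)/s+yi$ with constant Jacobian $1/s$. Using the Strebel-type decomposition of the horizontal trajectories of $\varphi$ connecting $[a,b]$ and $[c,d]$, I write
\[
\mu_{-\varphi}([a,b]\times [c,d])=\sum_k\int_{y_k^-}^{y_k^+}\frac{dy}{\ell_k(y)}
\]
over the maximal ``horizontal rectangles'' $R_k=[x_k^L(y),x_k^R(y)]\times[y_k^-,y_k^+]$ of widths $\ell_k(y)$. Under $f_{s+ti}$ each $R_k$ becomes a parallelogram $P_k$ whose slanted sides lie in $f_{s+ti}([a,b])$ and $f_{s+ti}([c,d])$; its horizontal slices at height $\eta$ have length $\ell_k(\eta)/s$, and the perpendicular distance between the two slanted sides is $\ell_k(\eta)/\sqrt{s^2+t^2}$.

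For the lower bound on the modulus I restrict, inside each $P_k$, to the sub-family of straight segments perpendicular to the slanted sides (direction $(s,t)/\sqrt{s^2+t^2}$ in the $\xi\eta$-plane). A length-area computation using the Jacobian $\sqrt{s^2+t^2}/s$ of the parameterization by starting point $r$ on the left slanted side and arclength $u$ along the perpendicular segment gives
\[
\mathrm{mod}(\text{sub-family in }P_k)\ge \frac{s^2+t^2}{s}\int_{y_k^-}^{y_k^+}\frac{dy}{\ell_k(y)}-\varepsilon_k,
\]
where the correction $\varepsilon_k$ comes from the perpendicular segments that exit through a horizontal (rather than slanted) side of $P_k$, and becomes negligible next to the main term as $s+|t|\to\infty$. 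Additivity of moduli over disjoint families (Property 2 with equality, using that the $P_k$ are pairwise disjoint) yields $\mathrm{mod}(f_{s+ti}(\Gamma_{[a,b]\times [c,d]}))\ge \frac{s^2+t^2}{s}\mu_{-\varphi}([a,b]\times [c,d])(1+o(1))$.

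For the upper bound I would use the test metric
\[
\rho(\xi,\eta)=\frac{\sqrt{s^2+t^2}}{\ell_k(\eta)}\quad\text{on }P_k,\qquad \rho\equiv 0\text{ elsewhere},
\]
whose $L^2$-integral equals $\sum_k\int(\ell_k(\eta)/s)\cdot(s^2+t^2)/\ell_k(\eta)^2\,d\eta=\frac{s^2+t^2}{s}\mu_{-\varphi}([a,b]\times [c,d])$. For allowability, each curve $\gamma$ from $f_{s+ti}([a,b])$ to $f_{s+ti}([c,d])$ should contain a sub-arc crossing some $P_k$ from its left slanted side all the way to its right slanted side; on such a crossing the Euclidean length is at least the perpendicular width $\ell_k(\eta)/\sqrt{s^2+t^2}$, which forces $\int_\gamma\rho\,ds\ge 1$.

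The main obstacle is the allowability in the upper bound. A priori $\gamma$ might enter a given $P_k$ only through one of its horizontal sides (images of transverse horizontal arcs in the $z$-plane), loop through the critical part of $\Omega_\varphi$ near the zeros of $\varphi$, and arrive at $f_{s+ti}([c,d])$ without ever fully traversing a single $P_k$. Handling this requires a topological argument using the Strebel structure of $\varphi$ and the connectedness of $\bigcup R_k$ across $[a,b]$ and $[c,d]$, possibly together with a refinement of $\rho$ near singularities. At the same time one must verify uniformly in $(s,t)$ that the correction terms $\varepsilon_k$ stay of lower order than $(s^2+t^2)/s$ as $s+|t|\to\infty$ in \emph{arbitrary} directions in $\mathbb{H}_{>0}$, covering both the anisotropic regime $|t|\gg s$ and the nearly-Teichm\"uller regime $s\gg|t|$.
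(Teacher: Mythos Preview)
Your lower bound is essentially the paper's argument in spirit, but you skip over a point the paper treats carefully. The ``horizontal rectangles'' $R_k$ have $y$-dependent widths $\ell_k(y)$, so $f_{s+ti}(R_k)$ is \emph{not} a parallelogram: its lateral sides are curves, not lines of slope $-s/t$. Consequently a segment in the direction $(s,t)/\sqrt{s^2+t^2}$ starting at height $y$ on the left side does not end at height $y$ on the right side, and its length is not simply $\ell_k(y)/\sqrt{s^2+t^2}$. Your claimed formula
\[
\mathrm{mod}(\text{sub-family in }P_k)\ge \frac{s^2+t^2}{s}\int_{y_k^-}^{y_k^+}\frac{dy}{\ell_k(y)}-\varepsilon_k
\]
therefore does not follow from a one-step length--area argument. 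The paper deals with this by first approximating the semicontinuous boundary graphs $h_1,h_2$ of each strip by continuous functions via Lusin's theorem (on a set whose complement $A_n$ has small measure), then by step functions $s_p^1,s_p^2$, so that the strip is replaced by a finite union of genuine rectangles. On each such rectangle the perpendicular-segment computation is clean (this is the paper's Lemma~\ref{lem:rectangle}); the exceptional set $A_n$ produces the controllable loss $-2m(A_n)/(c_1-c_2)$, and one passes to the limit $p\to\infty$, then $n\to\infty$. Your $\varepsilon_k$ would have to absorb all of this, and you do not indicate how.

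For the upper bound your approach diverges from the paper's, and the obstacle you identify is exactly why. The paper does \emph{not} attempt a test metric supported on the horizontal strips. Instead it splits $\Gamma_B=\Gamma_B^{\ge\delta}\cup\Gamma_B^{<\delta}$ according to whether the transverse $|dy|$-variation of a curve is at least $\delta$. For $\Gamma_B^{\ge\delta}$ the constant metric $\rho_\delta=1/\delta$ in the terminal natural parameter is allowable and has area $\|\varphi\|_1/\delta^2$, which kills this piece after dividing by $(s^2+t^2)/s$. For $\Gamma_B^{<\delta}$ one uses only quasi-invariance, $K(f_{s+ti})\sim (s^2+t^2)/s$, to get $\limsup\le \mathrm{mod}(\Gamma_B^{<\delta})$, and then Keith's upper-semicontinuity theorem for moduli gives $\limsup_{\delta\to 0}\mathrm{mod}(\Gamma_B^{<\delta})\le \mathrm{mod}(|\nu_B|)=\mu_{-\varphi}(B)$. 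This bypasses entirely the topological problem of curves wandering through the critical set of $\varphi$ without crossing any single strip: the $\ge\delta$ bucket catches every curve whose vertical oscillation is nontrivial, regardless of how it threads through the strips.
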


\noindent
{\bf Proof of Theorem \ref{thm:Teich-has-limit}.} Let $A:\mathbb{H}_{>0}\to\mathbb{D}$ be given by $A(z)=\frac{-z+1}{z+1}$. Note that $A(0)=1$, $A(1)=0$ and  $A(\infty )=-1$. Let $\lambda (s+ti):=A(s+ti)$ and note that $\lambda\to -1$ if and only if $s+|t|\to\infty$. Since $\mu_{\varphi}$ has no atoms (See \cite{HaSar1}) and $(D_{\varphi}\circ A)(s+ti)=D_{\varphi}^{*}(s+ti )$ we have that $D_{\varphi}(\lambda )\to [\mu_{-\varphi}]$ as $\lambda\to -1$ by Theorem \ref{thm:rephrase-main-theorem} and Lemma \ref{lem:mod_liouville_measure}. Moreover, $D_{\varphi}(-e^{i\theta}\lambda )=[-e^{i\theta}\lambda\frac{\bar{\varphi}}{|\varphi |}]=[\lambda\frac{-\overline{e^{-i\theta}\varphi}}{|-e^{-i\theta}\varphi |}]=D_{-e^{-i\theta}\varphi}(\lambda )\to [\mu_{e^{-i\theta}\varphi}]$ as $\lambda\to -1$ by Theorem \ref{thm:rephrase-main-theorem}. This gives the first statement of Theorem \ref{thm:Teich-has-limit} since $-e^{i\theta}\lambda\to e^{i\theta}$ as $\lambda\to -1$.

We define the extension of the Teichm\"uller disk $D_\varphi$ 
to the closed disk $\overline{\mathbb{D}}$
by setting
$$
D_\varphi(e^{i\theta})=[\mu_{e^{-i\theta}\varphi}]\quad (e^{i\theta}\in S^1).
$$
We proved above that if $\{\lambda_n\}_n\subset \mathbb{D}$
is a sequence  converging to $e^{i\theta_{0}}\in S^1$ then $D_\varphi(\lambda_n)\to [\mu_{e^{-i\theta_0}\varphi}]$
as $n\to \infty$
in the Thurston bordification.
To finish the proof of continuity, let $e^{i\theta_n}\to e^{i\theta_{\infty}}$ as $n\to\infty$. We need to prove that $[\mu_{e^{-i\theta_n}\varphi}]\to [\mu_{e^{-i\theta_{\infty}}\varphi}]$ and this  
 follows by the diagonal argument. The injectivity of the extension of $D_{\varphi}$ to the unit circle $S^1$ follows by \cite[Theorem 2]{HaSar1}.

\subsection{Proof of Theorem \ref{thm:rephrase-main-theorem}}
An elementary computation gives the Beltrami coefficient of $f_{s+ti}(z)$ to be
$$
\mu (z)=\mu_{s+ti}(z)=\frac{1-(s+ti)}{1+(s+ti)}.
$$
Since $s^2+t^2\to \infty$ as $s+|t|\to \infty$,
the maximal dilatation $K(f_{s+ti})$ of $f_{s+it}$ behaves
\begin{align}
K(f_{s+ti})
&=\frac{\sqrt{(1+s)^2+t^2}+\sqrt{(1-s)^2+t^2}}{\sqrt{(1+s)^2+t^2}-\sqrt{(1-s)^2+t^2}}
\label{eq:K-at-infinity}
\\
&=
\frac{(\sqrt{(1+s)^2+t^2}+\sqrt{(1-s)^2+t^2})^2}{4s}
\nonumber
\\
&=
\frac{s^2+t^2}{4s}\left(
\sqrt{1+\frac{2s+1}{s^2+t^2}}+\sqrt{1-\frac{2s-1}{s^2+t^2}}
\right)^2
=
\frac{s^2+t^2}{s}\left(
1+o(1)
\right)
\nonumber
\end{align}
as $s+|t|\to \infty$.

{\it Acknowledgement.} We include the proposition below for the convenience of the reader. The main ideas and techniques are already contained in \cite{HaSar1}. In particular, the idea of estimating the modulus of a curve family under vertical shrinking by a subfamily of curves which have horizontal variation at most $\delta >0$ is already used in \cite{HaSar1}, \cite{HaSar2}. Lemma \ref{lem:mod_liouville_measure} from Section 3 also appears in \cite{HaSar1}, \cite{HaSar2}.

\begin{proposition}
\label{prop:upper}
Under the above notation
$$
\limsup_{s+|t|\to\infty} \frac{1}{s+\frac{t^2}{s}}
\mathrm{mod}(f_{s+ti}(\Gamma_{[a,b]\times [c,d]}))\leq 
\mu_{-\varphi}([a,b]\times [c,d]).
$$
\end{proposition}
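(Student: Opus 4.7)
The plan is to follow the strategy of \cite{HaSar1,HaSar2}: construct an admissible metric for $f_{s+ti}(\Gamma_{[a,b]\times[c,d]})$ whose area is asymptotically $(s+t^2/s)\mu_{-\varphi}([a,b]\times[c,d])$. First, I pass to the natural parameter $z=x+yi$ of $\varphi$, in which $f_{s+ti}$ becomes the affine map $A(x,y)=((x-ty)/s,y)$ with constant Jacobian $1/s$. By conformal invariance of the modulus,
\[
\mathrm{mod}(f_{s+ti}(\Gamma_{[a,b]\times[c,d]}))=\mathrm{mod}_{A(\Omega)}(A(\tilde{\Gamma})),
\]
where $\Omega$ is the natural parameter image of $\mathbb{D}$ and $\tilde{\Gamma}$ is the natural parameter lift of $\Gamma_{[a,b]\times[c,d]}$. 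Let $R^h\subset\Omega$ be the union of horizontal trajectories of $\varphi$ with one endpoint on $[a,b]$ and the other on $[c,d]$, and let $l^h(y)$ denote the length of the horizontal trajectory at height $y$. A cylinder decomposition $R^h=\bigcup_k R_k$ transverse to vertical arcs $\{H_k\}$ (dual to the $\{J_k\}$ in the $-\varphi$ form of the definition from \S 2) gives
\[
\mu_{-\varphi}([a,b]\times[c,d])=\iint_{R^h}\frac{dx\,dy}{l^h(y)^2}.
\]

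Next, for small $\epsilon,\delta>0$, I would propose the candidate metric
\[
\rho(u,v)=\frac{(1+\epsilon)\sqrt{s^2+t^2}}{l^h(v)}\,\chi_{A(R^h)}(u,v)
\]
on $A(\Omega)$, and verify admissibility on the subfamily $\tilde{\Gamma}^\delta$ of curves with vertical variation at most $\delta$. Cauchy--Schwarz applied to the relation $dx=s\,du+t\,dv$ gives the pointwise bound $|dw|\geq |dx|/\sqrt{s^2+t^2}$, hence
\[
\int_{A(\tilde\gamma)}\rho\,|dw|\;\geq\;(1+\epsilon)\int_{\tilde\gamma\cap R^h}\frac{|dx|}{l^h(y)}.
\]
For $\tilde\gamma\in\tilde{\Gamma}^\delta$ traversing a cylinder $R_k$ from the $[a,b]$-side to the $[c,d]$-side, the small vertical variation restricts $y$ to a thin slab on which $l^h(y)$ is essentially constant, yielding $\int|dx|/l^h(y)\geq 1-O(\delta)$; choosing $\epsilon=\Theta(\delta)$ makes $\rho$ admissible for $\tilde{\Gamma}^\delta$.

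The area computation uses the Jacobian $1/s$ of $A$:
\[
\iint\rho^2\,du\,dv=(1+\epsilon)^2\frac{s^2+t^2}{s}\iint_{R^h}\frac{dx\,dy}{l^h(y)^2}=(1+\epsilon)^2\Big(s+\frac{t^2}{s}\Big)\mu_{-\varphi}([a,b]\times[c,d]).
\]
For the complementary subfamily, every image curve has Euclidean length $\geq\delta$ (since $A$ preserves $y$), and $A(\Omega)$ has area $\|\varphi\|_1/s$; so $\mathrm{mod}(A(\tilde{\Gamma}\setminus\tilde{\Gamma}^\delta))\leq \|\varphi\|_1/(s\delta^2)$, and dividing by $(s^2+t^2)/s$ yields $\|\varphi\|_1/((s^2+t^2)\delta^2)\to 0$ as $s+|t|\to\infty$. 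Subadditivity of the modulus then gives $\limsup_{s+|t|\to\infty}\mathrm{mod}(f_{s+ti}(\Gamma_{[a,b]\times[c,d]}))/(s+t^2/s)\leq (1+\epsilon)^2\mu_{-\varphi}([a,b]\times[c,d])$, and letting $\epsilon,\delta\to 0$ concludes. The main obstacle will be making the admissibility bound $\int|dx|/l^h(y)\geq 1-O(\delta)$ rigorous when $\tilde\gamma$ enters and exits cylinders $R_k$ several times, where $l^h$ jumps across cylinder boundaries, and near $\partial\Omega$; these technicalities are precisely those handled in \cite{HaSar1,HaSar2} for the real-parameter case, and the only genuinely new ingredient here is that the shear in $A$ produced by $t\neq 0$ is absorbed uniformly by the single Cauchy--Schwarz factor $\sqrt{s^2+t^2}$.
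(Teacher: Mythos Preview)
Your decomposition into curves with small versus large $\nu$-variation, and your treatment of the large-variation subfamily via the metric $1/\delta$, are both correct and match the paper. The genuine problem is the admissibility of your metric on the small-variation subfamily $\tilde\Gamma^\delta$.

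The metric $\rho(u,v)=(1+\epsilon)\sqrt{s^2+t^2}/l^h(v)\,\chi_{A(R^h)}$ is \emph{not} admissible for $A(\tilde\Gamma^\delta)$, and no multiplicative factor $(1+\epsilon)$ repairs it. The assertion that ``small vertical variation restricts $y$ to a thin slab on which $l^h(y)$ is essentially constant'' is false: $l^h=h_1-h_2$ is only lower semicontinuous and can oscillate by an arbitrarily large factor on any slab of fixed width. Concretely, if on a strip one has $l^h(y_0)=1$ but $l^h(y_1)=M$ with $|y_0-y_1|<\delta/4$, a curve from $(0,y_0)$ to $(1,y_0)$ that detours through height $y_1$ has $\int|dx|/l^h(y)\approx 1/M$, which is as small as one likes. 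Thus your key inequality $\int_{\tilde\gamma\cap R^h}|dx|/l^h(y)\geq 1-O(\delta)$ fails. Your Cauchy--Schwarz factor $\sqrt{s^2+t^2}$ is sharp for the affine distortion, but the obstruction already lives in the source metric $|dx|/l^h(y)$ and has nothing to do with $t$. Finally, the remark that these technicalities ``are precisely those handled in \cite{HaSar1,HaSar2}'' is not accurate: those papers do not build an explicit admissible metric for $\Gamma_B^{<\delta}$.

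The paper sidesteps this entirely. For $\Gamma_B^{<\delta}$ it uses only the quasi-invariance bound $\mathrm{mod}(f_{s+ti}(\Gamma_B^{<\delta}))\leq K(f_{s+ti})\,\mathrm{mod}(\Gamma_B^{<\delta})$ together with the asymptotic $K(f_{s+ti})\sim s+t^2/s$, obtaining $\limsup\leq\mathrm{mod}(\Gamma_B^{<\delta})$. The passage $\delta\to 0$ is then handled by Keith's upper-semicontinuity theorem for modulus, giving $\limsup_{\delta\to 0}\mathrm{mod}(\Gamma_B^{<\delta})\leq\mathrm{mod}(|\nu_B|)$, and only for the family $|\nu_B|$ of \emph{genuine} horizontal leaves does the Beurling extremal metric $|dx|/l^h(y)$ enter, yielding $\mathrm{mod}(|\nu_B|)=\mu_{-\varphi}(B)$. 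In other words, the metric you wrote down is the Beurling extremal metric for the limit family, not an admissible metric for the approximating families; Keith's theorem is exactly the tool that lets one exchange the two.
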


\begin{proof}
Let $B=[a,b]\times [c,d]\subset (S^1\times S^1)-diag$ be a fixed box of geodesics. Let $\nu =\left|{\rm Im}(\sqrt{\varphi (\zeta )}d\zeta)\right|=|dy|$ be the horizontal foliation of $\nu$ which is also the vertical foliation of $-\varphi$.  Note that $\nu$ is a measured foliation of $\mathbb{D}$ while $\mu_{-\varphi}$ is a measured geodesic lamination of $\mathbb{D}$. 

Let $\Gamma_B$ be the family of all Jordan curves that connect $[a,b]$ to $[c,d]$ inside $\mathbb{D}$. Let $\delta >0$ be fixed. Define $\Gamma^{\geq\delta}_B$ to be the family of all $\gamma\in \Gamma_B$ such that $\nu (\gamma )\geq\delta$, and define $\Gamma^{<\delta}_B$ to be all $\gamma\in \Gamma_B$ such that $\nu (\gamma )<\delta$ (See \cite{HaSar1}). Since $f_{s+ti}$ does not change the $y$-coordinate in the canonical parameter of $\varphi$, we have that
curves in $\Gamma_B^{\geq \delta}$ are mapped onto curves in $\Gamma_{f_{s+ti}(B)}^{\geq\delta}$ and curves in $\Gamma_B^{< \delta}$ are mapped onto curves in $\Gamma_{f_{s+ti}(B)}^{<\delta}$.  

Define $\rho_{\delta} (z')=\frac{1}{\delta}$ in the canonical parameter $z'=f_{s+ti}(z)=x'+y'i$ of the terminal quadratic differential on $f_{s+ti}(\mathbb{D})$ corresponding to $\varphi$ and the map $f_{s+ti}$. Since $l_{\rho}(\gamma ):=\int_{\gamma}\rho_{\delta} (z')|dz'|\geq\int_{\gamma}\frac{1}{\delta}dy'$, we have
$$
l_{\rho}(\gamma )\geq\frac{1}{\delta}\delta =1
$$
for all $\gamma\in f_{s+ti}(\Gamma^{\geq\delta}_B)=\Gamma_{f_{s+ti}(B)}^{\geq\delta}$. Thus $\rho_{\delta}$ is allowable for the family
$f_{s+ti}(\Gamma_B^{\geq\delta})=\Gamma_{f_{s+ti}(B)}^{\geq\delta}$ and by the definition of the modulus $\mathrm{mod}(f_{s+ti}(\Gamma_B^{\geq\delta}))\leq \frac{1}{\delta^2}\int_{\mathbb{D}}|\varphi (\zeta )|d\xi d\eta$. We obtain
\begin{equation}
\label{eq:limsup1}
\lim_{s+|t|\to\infty}\frac{1}{s+\frac{t^2}{s}}\mathrm{mod}(f_{s+ti}(\Gamma_B^{\geq\delta}))\leq\lim_{s+|t|\to\infty}\frac{1}{s+\frac{t^2}{s}}\frac{1}{\delta^2}\int_{\mathbb{D}}|\varphi (\zeta )|d\xi d\eta=0.
\end{equation}
From \eqref{eq:K-at-infinity}
and the quasi-invariance of the modulus we obtain
\begin{equation}
\label{eq:limsup}
\lim_{s+|t|\to\infty}\frac{1}{s+\frac{t^2}{s}}\mathrm{mod}(f_{s+ti}(\Gamma_B^{<\delta}))\leq \lim_{s+|t|\to\infty}\frac{1}{s+\frac{t^2}{s}}K(f_{s+ti}) \mathrm{mod}(\Gamma_B^{<\delta})=\mathrm{mod}(\Gamma_B^{<\delta}).
\end{equation}
If all points on $S^1$ are on a finite $\varphi$-distance from a point in $\mathbb{D}$, then as $\delta\to 0$,
$\Gamma_B^{<\delta}$ converges to 
the set $|\nu_B|$ of horizontal trajectories of $\varphi$ that have one endpoint in $[a,b]$ and the other endpoint in $[c,d]$ (See \cite{HaSar1}).
Here,
a sequence of rectifiable curves $\gamma_n$ \emph{converges} to a curve $\gamma$ if there is a uniformly Lipschitz parametrizations of all curves by the same interval so that $\gamma_n$ converge uniformly to $\gamma$ as functions. 
A sequence $\Gamma_n$ of families of rectifiable curves has  \emph{limit} $\Gamma$ if $\Gamma$ consists of all curves $\gamma$ such that there is a sequence $\gamma_n\in\Gamma_n$ with $\gamma_n$ converges to $\gamma$ in the above sense.
By applying Keith's theorem (See \cite{Keith} and \cite{HaSar1}) we have
\begin{equation}
\label{eq:delta0}
\limsup_{\delta\to 0}\mathrm{mod}(\Gamma_B^{<\delta})\leq \mathrm{mod}(|\nu_B|)
\end{equation}
as $\delta\to 0$. Keith's theorem is stated for compact metric spaces and in the metric induced by an integrable holomorphic quadratic differentials some points of $S^1$ could be on infinite distance from the interior points.  Note that even when some points of $S^1$ are on infinite $\varphi$-distance the formula (\ref{eq:delta0}) holds (See \cite[proof of Theorem 1.4]{HaSar1}).

Therefore by (\ref{eq:limsup}) and (\ref{eq:limsup1}) we obtain
\begin{align*}
\limsup_{s+|t|\to\infty}\frac{1}{s+\frac{t^{2}}{s}}
\mathrm{mod}(f_{s+ti}(\Gamma_B))
&\leq 
\limsup_{s+|t|\to\infty}\frac{1}{s+\frac{t^{2}}{s}}\mathrm{mod}(f_{s+ti}(\Gamma_B^{<\delta}))
\\
&\qquad
+\limsup_{s+|t|\to\infty}\frac{1}{s+\frac{t^{2}}{s}}\mathrm{mod}(f_{s+ti}(\Gamma_B^{\geq\delta}))
\\
&=
\mathrm{mod}(\Gamma_B^{<\delta}).
\end{align*}
Since the left hand side of the above inequality does not depend on $\delta$ and the right hand side converges to $\mathrm{mod}|\nu^B|$ as $\delta\to 0$, we obtained 
$$
\limsup_{s+|t|\to\infty}\frac{1}{s+\frac{t^{2}}{s}}
\mathrm{mod}(f_{s+ti}(\Gamma_B))\leq \mathrm{mod}(|\nu^B|).
    $$
By the use of the Beurling's criteria 
(See \cite{Ahlfors}),
 we conclude that the metric $\frac{dy}{l(z)}$ in the canonical coordinates $z=x+yi$ of $\varphi$ is extremal for the family of curves $|\nu^B|$, where $l(z)$ is the $\varphi$-length of the horizontal trajectory through $z$. Consequently,
we get $\mathrm{mod}(|\nu^B|)=\mu_{-\varphi}(B)$ and the proof of Proposition \ref{prop:upper} is finished.
\end{proof}

We also need a converse inequality whose proof is  our main contribution. When the convergence is only along the Teichm\"uller geodesic this inequality follows essentially by Beurling's criteria (See \cite{HaSar1}) while the proof when the convergence is along an arbitrary sequence in $D_{\varphi}$ is more substantial. We first prove a special case of the converse inequality in the following lemma.

\begin{lemma}
\label{lem:rectangle}
Let $R=[0,a]\times [0,b]$ and $A\subset [0,b]$ be a measurable subset of a possibly positive Lebesgue measure $m(A)$. Let $\Gamma$ be the family of curves in $R$ that connects $\{0\}\times ([0,b]\setminus A)$ and $\{a\}\times ([0,b]\setminus A)$. Then
$$
\lim_{s+|t|\to \infty}\frac{1}{s+\frac{t^2}{s}}\mathrm{mod}(f_{s+ti}(\Gamma ))\geq\frac{b-2m(A)}{a}.
$$
\end{lemma}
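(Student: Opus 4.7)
The plan is to exhibit an explicit one-parameter sub-family $\Gamma_{0}\subset\Gamma$ whose image under $f_{s+ti}$ has a modulus that can be computed exactly and already realizes the claimed asymptotic. By the monotonicity property $(1)$ of Section 3, $\mathrm{mod}(f_{s+ti}(\Gamma))\geq \mathrm{mod}(f_{s+ti}(\Gamma_{0}))$, so a lower bound for the sub-family modulus will suffice.

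Set $\Delta^{*}=at/(s^{2}+t^{2})$, and for each $y_{0}\in\mathbb{R}$ let $\sigma_{y_{0}}$ denote the straight segment in $R$ from $(0,y_{0})$ to $(a,y_{0}+\Delta^{*})$. A direct computation shows that the image vector $f_{s+ti}(a,y_{0}+\Delta^{*})-f_{s+ti}(0,y_{0})$ equals $a(s,t)/(s^{2}+t^{2})$, so each image segment $f_{s+ti}(\sigma_{y_{0}})$ is straight, of common length $L=a/\sqrt{s^{2}+t^{2}}$, and parallel to the common unit direction $(s,t)/\sqrt{s^{2}+t^{2}}$. The slope $\Delta^{*}/a$ is not arbitrary; among all linear left-to-right segments in $R$, it is exactly the one that minimizes the Euclidean length of the image. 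Define
\[
E=\{y_{0}\in[0,b]:y_{0}+\Delta^{*}\in[0,b],\ y_{0}\notin A,\ y_{0}+\Delta^{*}\notin A\}
\]
and $\Gamma_{0}=\{\sigma_{y_{0}}\}_{y_{0}\in E}$. Each $\sigma_{y_{0}}$ lies in $R$ by convexity and has its boundary endpoints in $\{0,a\}\times([0,b]\setminus A)$, so $\Gamma_{0}\subset\Gamma$. Since the bad set $A\cup(A-\Delta^{*})$ has linear measure at most $2m(A)$, we have $m(E)\geq b-|\Delta^{*}|-2m(A)$.

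Next, I would compute $\mathrm{mod}(f_{s+ti}(\Gamma_{0}))$ as follows. The map $\Phi\colon (y_{0},\sigma)\in E\times[0,L]\mapsto f_{s+ti}(0,y_{0})+\sigma\,(s,t)/\sqrt{s^{2}+t^{2}}$ injectively parametrizes the support of $f_{s+ti}(\Gamma_{0})$ with constant Jacobian $\sqrt{s^{2}+t^{2}}/s$. For any allowable metric $\tilde\rho$ for $f_{s+ti}(\Gamma_{0})$, Cauchy--Schwarz along each individual segment yields $\int_{0}^{L}\tilde\rho^{2}\,d\sigma\geq L^{-1}$; integrating in $y_{0}$ over $E$ and changing variables via $\Phi^{-1}$ back to the $(u,v)$-plane absorbs the Jacobian and gives
\[
\iint \tilde\rho^{2}\,du\,dv\ \geq\ \frac{\sqrt{s^{2}+t^{2}}}{s}\cdot\frac{m(E)}{L}\ =\ \frac{m(E)\,(s^{2}+t^{2})}{as}.
\]
The constant metric $\rho\equiv L^{-1}$ on the support is allowable and realizes this value, so $\mathrm{mod}(f_{s+ti}(\Gamma_{0}))=m(E)(s^{2}+t^{2})/(as)$. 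Combining with $m(E)\geq b-|\Delta^{*}|-2m(A)$, dividing by $s+t^{2}/s=(s^{2}+t^{2})/s$, and letting $s+|t|\to \infty$ (so that $|\Delta^{*}|=a|t|/(s^{2}+t^{2})\to 0$) produces the asserted inequality.

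The conceptual crux is the optimal choice of slope: a horizontal sub-family (slope $0$) would only produce the factor $s$, whereas the sheared slope $t/(s^{2}+t^{2})$ makes the image segments as short as $a/\sqrt{s^{2}+t^{2}}$ and converts the factor $s$ into the full factor $s+t^{2}/s$ matching the asymptotic of $K(f_{s+ti})$ in \eqref{eq:K-at-infinity}. The factor $b-2m(A)$ (rather than $b-m(A)$) arises because forbidding $A$ at both endpoints $y_{0}$ and $y_{0}+\Delta^{*}$ costs $m(A)$ twice. The step I expect to be slightly delicate is the Cauchy--Schwarz lower bound for the modulus of a parallel-segment family with a possibly non-interval transverse set $E$, since the support is then a disconnected union of parallel strips rather than a single rectangle.
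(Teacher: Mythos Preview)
Your proof is correct and takes essentially the same approach as the paper: the sub-family $f_{s+ti}(\Gamma_0)$ you construct is precisely the family of segments orthogonal to the two slanted sides of the parallelogram $f_{s+ti}(R)$, which is exactly the sub-family the paper uses (computed there via trigonometry in the image parallelogram rather than via your pre-image parametrization). Your Cauchy--Schwarz/change-of-variables modulus computation yields the same lower bound $(b-|\Delta^{*}|-2m(A))/a$ after dividing by $s+t^{2}/s$, matching the paper's estimate $(l-2m(A)\sqrt{1+t^{2}/s^{2}})/h$ line for line.
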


\begin{figure}
\includegraphics[width=10cm, bb=0 0 589 281]{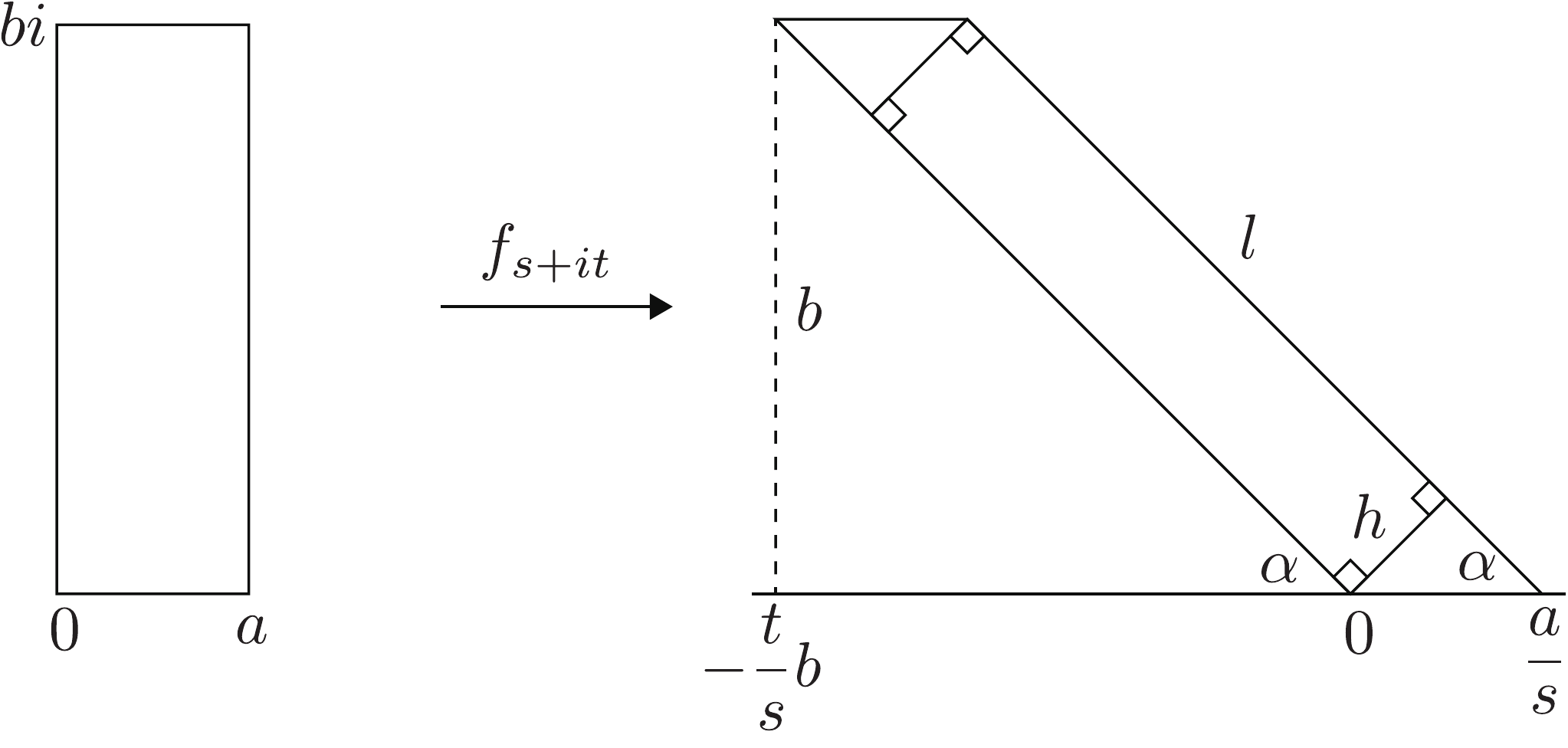}
\caption{The parallelogram $R_{s+ti}=f_{s+ti}(R)$.} 
\label{fig:slanted-rectangle}
\end{figure}

\begin{proof}
From the large right angled triangle in Figure \ref{fig:slanted-rectangle} we get
$$
\tan\alpha =\frac{b}{\frac{|t|}{s}b}=\frac{s}{|t|}.
$$
Let $h$ be the Euclidean length of the orthogonal arcs to the two slanted sides of $f_{s+ti}(R)$. Then the small right-angled triangle gives
$$
\sin\alpha =\frac{h}{\frac{1}{s}a}.
$$
The above two equalities give
$$
h=\frac{a}{s}
\frac{\frac{s}{|t|}}{\sqrt{1+\frac{s^2}{t^2}}}=\frac{a}{\sqrt{s^2+t^2}}.
$$

Let $l$ be the length of the subarc of the slanted side of $f_{s+ti}(R)$ that consists of endpoints of the arcs of length $h$ orthogonal to both slanted sides. Then we have
$$
l=\sqrt{b^2+\frac{t^2}{s^2}b^2}-\cos\alpha \frac{a}{s}=\frac{b}{s}\sqrt{s^2+t^2}-\frac{1}{\sqrt{1+\frac{s^2}{t^2}}}\frac{a}{s}
$$
which gives
$$
l=\frac{b(s+\frac{t^2}{s})-a\frac{|t|}{s}}{\sqrt{s^2+t^2}}.
$$

The family of curves $f_{s+ti}(\Gamma )$ contains a subfamily $\Gamma_{s+ti}'$ of all line segments that are orthogonal at both ends to the slanted boundary sides of $f_{s+ti}(R)$ except the line segments that have at least one endpoint in $f_{s+ti}((\{ 0\}\times A)\cup (\{ a\}\times A))$. The total Lebesgue measure of the set of endpoints of $\Gamma_{s+ti}'$ is at least $l-2m(A)\sqrt{1+\frac{t^2}{s^2}}$. This gives
$$
\mathrm{mod} (f_{s+ti}(\Gamma ))\geq \mathrm{mod}(\Gamma_{s+ti}')\geq \frac{l-2m(A)\sqrt{1+\frac{t^2}{s^2}}}{h}.
$$
Using the above estimates we obtain
\begin{align*}
\mathrm{mod} (f_{s+ti}(\Gamma ))
&\geq \Big{[}\frac{b(s+\frac{t^2}{s})-a\frac{|t|}{s}}{\sqrt{s^2+t^2}}-2m(A)\sqrt{1+\frac{t^2}{s^2}}\Big{]}:\frac{a}{\sqrt{s^2+t^2}}\\
&=\frac{b}{a}(s+\frac{t^2}{s})-\frac{|t|}{s}-\frac{2m(A)}{a}\frac{s^2+t^2}{s}
\end{align*}
and
$$
\liminf_{s+|t|\to\infty} \frac{\mathrm{mod}( f_{s+ti}(\Gamma ))}{s+\frac{t^2}{s}} \geq \liminf_{s+|t|\to\infty} \Big{[}
\frac{b}{a} -\frac{|t|}{s(s+\frac{t^2}{s})}-\frac{2m(A)}{a}\Big{]}.
$$

Note that $\limsup_{s+|t|\to\infty} \frac{|t|}{s(s+\frac{t^2}{s})}=\limsup_{s+|t|\to\infty}\frac{|t|}{s^2+t^2}=0$ and
 we obtain
 $$
\liminf_{s+|t|\to\infty} \frac{\mathrm{mod} (f_{s+ti}(\Gamma ))}{s+\frac{t^2}{s}} \geq \frac{b-2m(A)}{a}
$$
\end{proof}

We consider the following situation. Let $D=\{ z=x+yi| y\in [0,s_0], h_2(y)<x<h_1(y)\}$ where $h_1$ is lower semicontinuous and $h_2$ upper semicontinuous function such that $h_1(y)>c_1>0$ and $h_2(y)<c_2<0$. Then $D$ is a domain in $\mathbb{C}$ and we further assume that $D$ has finite Lebesgue area. Denote by $\Gamma_D$ the family of curves that connects the graphs of $h_1$ and $h_2$ inside $D$.
Let $R=[a,b]\times [0,s_0]$ be a rectangle that contains the graphs of $h_1$ and $h_2$ over $[0,s_0]-A$, where $A\subset [0,s_0]$ is Lebesgue measurable and $2m(A)<s_0$.
Then we prove

\begin{lemma}
\label{lem:one_rectangle_approx}
Under the above notation,
$$
\liminf_{s+|t|\to\infty}\frac{1}{s+\frac{t^2}{s}}\mathrm{mod}(f_{s+ti}(\Gamma_D))\geq\frac{s_0-2m(A)}{b-a}.
$$
\end{lemma}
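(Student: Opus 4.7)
The plan is to transfer the lower bound from Lemma~\ref{lem:rectangle} to $f_{s+ti}(\Gamma_D)$ by means of a well-chosen subfamily of orthogonal line segments in the parallelogram $f_{s+ti}(R)$, combined with property 3 of the modulus (monotonicity under subcurve inclusion).

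First, exactly as in the proof of Lemma~\ref{lem:rectangle}, let $\Gamma'_{s+ti}$ denote the family of line segments in $f_{s+ti}(R)$ orthogonal at both ends to the two slanted sides of the parallelogram $f_{s+ti}(R)$, except those with an endpoint in $f_{s+ti}((\{a\}\cup\{b\})\times A)$. The same geometric computation performed in that lemma, now applied to $R=[a,b]\times[0,s_0]$ (of width $b-a$ and height $s_0$), gives
$$
\mathrm{mod}(\Gamma'_{s+ti})\geq \frac{s_0-2m(A)}{b-a}\Bigl(s+\frac{t^2}{s}\Bigr)-\frac{|t|}{s}.
$$
Pulling back by $f_{s+ti}$, each $\gamma'\in \Gamma'_{s+ti}$ corresponds to a straight line segment $\gamma$ in $R$ running from $(a,y_0)$ to $(b,y_1)$ with $y_0,y_1\in[0,s_0]\setminus A$.

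The core claim is that every such $\gamma$ contains a subcurve in $\Gamma_D$. The $x$-coordinate $x(\tau)=a+\tau(b-a)$ is strictly monotone from $a<c_2$ to $b>c_1$, so there is $\tau^\ast\in(0,1)$ with $x(\tau^\ast)\in(c_2,c_1)$; since $h_2(y)<c_2<c_1<h_1(y)$ holds for \emph{every} $y\in[0,s_0]$, we have $\gamma(\tau^\ast)\in D$. The conditions $y_0,y_1\notin A$ force $a\leq h_2(y_0)$ and $b\geq h_1(y_1)$, so the endpoints of $\gamma$ lie outside $D$ or on $\partial D$. Set
$$
\alpha:=\sup\{\tau\leq\tau^\ast:\,x(\tau)\leq h_2(y(\tau))\},\quad
\beta:=\inf\{\tau\geq\tau^\ast:\,x(\tau)\geq h_1(y(\tau))\}.
$$
Upper semicontinuity of $h_2$ and lower semicontinuity of $h_1$ make the defining sets closed, and one checks that $\gamma(\alpha)$ lies on the closure of the graph of $h_2$, that $\gamma(\beta)$ lies on the closure of the graph of $h_1$, and that $h_2(y(\tau))<x(\tau)<h_1(y(\tau))$ for $\tau\in(\alpha,\beta)$. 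Thus $\gamma|_{[\alpha,\beta]}\in\Gamma_D$.

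Granting the subcurve claim, property 3 of the modulus yields $\mathrm{mod}(f_{s+ti}(\Gamma_D))\geq\mathrm{mod}(\Gamma'_{s+ti})$. Dividing the displayed inequality by $s+t^2/s$ and taking $\liminf$ as $s+|t|\to\infty$ completes the proof. The main obstacle is the subcurve claim: the semicontinuity (rather than continuity) of $h_1$ and $h_2$ blocks a naive intermediate value argument, so one must exploit the strict monotonicity of $x(\tau)$ along the line segment together with the uniform separation $h_2<c_2<0<c_1<h_1$ to force $\gamma$ into $D$ and to identify the relevant entry/exit points on (the closures of) the two graphs.
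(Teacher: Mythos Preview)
Your proposal is correct and follows essentially the same approach as the paper: both introduce the family of orthogonal segments $\Gamma'_{s+ti}$ in $f_{s+ti}(R)$, verify that each such segment contains a subcurve of $f_{s+ti}(\Gamma_D)$, and then invoke monotonicity (property 3) together with the explicit lower bound from Lemma~\ref{lem:rectangle}. The only difference is cosmetic: the paper argues the subcurve claim directly in the image parallelogram $R_{s+ti}$ via a short topological crossing argument (endpoints outside $D_{s+ti}$, segment meets $D_{s+ti}$, segment avoids $y=0$ and $y=s_0$), whereas you pull back to $R$ and make the crossing explicit through the $\sup/\inf$ construction using the semicontinuity of $h_1,h_2$.
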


\begin{proof}
Let $D_{s+ti}:=f_{s+ti}(D)$ and $R_{s+ti}:=f_{s+ti}(R)$. Let $\Gamma_{s+ti}^{\perp}$ be the family of curves that consists of orthogonal segments to the slanted sides of the parallelogram $R_{s+ti}$ such that the endpoints of each $\gamma\in\Gamma_{s+ti}^{\perp}$ do not belong to $(\{ a\}\times A)\cup(\{b\}\times A)$ (cf. Figure \ref{fig:slanted-rectangle}).  We claim that each $\gamma\in \Gamma_{s+ti}^{\perp}$ contains a subcurve in $f_{s+ti}(\Gamma_D)$.

Indeed, since the graphs of $h_1$ and $h_2$ over $[0,s_0]-A$ are in $R$ it follows that both endpoints of $\gamma$ are outside $D_{s+ti}$. On the other hand $\gamma$ intersects $D_{s+ti}$ and does not intersect the real axis or the line parallel to the real axis through point $s_0i$. This implies that $\gamma$ intersects the images under $f_{s+ti}$ of both graphs of $h_1$ and $h_2$ over $[0,s_0]$. Therefore there is a subsegment $\gamma'$ of $\gamma$ that is in $f_{s+ti}(\Gamma_D)$. 

By the monotonicity of the module (See \S 3, property 3) we have 
$$
\mathrm{mod}(f_{s+ti}(\Gamma_D))\geq\mathrm{mod}(\Gamma_{s+ti}^{\perp}).
$$
The family of curves $\Gamma_{s+ti}^{\perp}$ is used in the proof of Lemma \ref{lem:rectangle} to obtain the lower bound. Therefore the lower bound in Lemma \ref{lem:rectangle} implies 
$$
\liminf_{s+|t|\to\infty}\frac{1}{s+\frac{t^2}{s}}\mathrm{mod}(f_{s+ti}(\Gamma_D))\geq 
\frac{s_0-2m(A)}{b-a}.
$$
\end{proof}
 
 We are ready to prove the lower bound for the general case.

\begin{proposition} 
\label{prop:lower}
Under the above notation we have $$
\liminf_{s+|t|\to\infty}\frac{1}{s+\frac{t^2}{s}}\mathrm{mod}(f_{s+ti}(\Gamma_B))\geq \mu_{-\varphi}([a,b]\times [c,d]),
$$
for all boxes of geodesics $[a,b]\times [c,d]\subset (S^1\times S^1)-diag$.
\end{proposition}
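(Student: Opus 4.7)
The plan is to decompose $\mu_{-\varphi}([a,b]\times[c,d])$ as a Riemann-type sum over finitely many ``strips'' of horizontal $\varphi$-trajectories joining $[a,b]$ and $[c,d]$, realize each strip as a rectangle in the natural parameter of $\varphi$, and then apply Lemma \ref{lem:one_rectangle_approx} one rectangle at a time. This mirrors the upper-bound proof of Proposition \ref{prop:upper}, but now uses disjointness together with the additivity clause of property 2 of the modulus in place of the Keith-type limit argument.

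Fix $\varepsilon>0$. Since
$$
\mu_{-\varphi}(B)=\sum_{k}\int_{J_k}\frac{|dy|}{\ell(z)}
$$
is a convergent series over countably many transverse arcs $J_k$ (which, in the natural parameter $z=x+yi$ of $\varphi$, are vertical arcs, because vertical trajectories of $-\varphi$ are horizontal trajectories of $\varphi$) and where $\ell(z)$ denotes the $\varphi$-length of the horizontal trajectory through $z$, first I would truncate to a finite subfamily $J_1,\dots,J_N$ with total contribution at least $\mu_{-\varphi}(B)-\varepsilon$. Using (uniform) continuity of $\ell$ on each $J_k$, I would then refine $J_k$ into subarcs $J_{k,i}$ on which $\ell$ is essentially constant, equal to some $\ell_{k,i}$. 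Each $J_{k,i}$, together with the horizontal trajectories of $\varphi$ passing through it that actually connect $[a,b]$ to $[c,d]$, sweeps out in the natural parameter a genuine rectangle $R_{k,i}$ of vertical extent $|J_{k,i}|$ and horizontal extent $\ell_{k,i}$, at the cost of removing an exceptional set $A_{k,i}\subset J_{k,i}$ whose total Lebesgue measure can be made smaller than $\varepsilon$.

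Next I would exploit disjointness. The rectangles $R_{k,i}$ embed pairwise disjointly in $\mathbb{D}$ via the inverse of the natural parameter, and the family $\Gamma_{R_{k,i}}$ of curves joining the two vertical sides of $R_{k,i}$ pulls back to a subfamily of $\Gamma_B$. Properties 1 and 2 of the modulus therefore give
$$
\mathrm{mod}(f_{s+ti}(\Gamma_B))\;\geq\;\sum_{k,i}\mathrm{mod}(f_{s+ti}(\Gamma_{R_{k,i}})).
$$
Since in natural coordinates $f_{s+ti}$ is exactly the affine shear $(x,y)\mapsto (x/s-ty/s,\,y)$ that appears in the hypothesis of Lemma \ref{lem:one_rectangle_approx}, that lemma applies verbatim to each $R_{k,i}$ and yields
$$
\liminf_{s+|t|\to\infty}\frac{\mathrm{mod}(f_{s+ti}(\Gamma_{R_{k,i}}))}{s+t^{2}/s}\;\geq\;\frac{|J_{k,i}|-2m(A_{k,i})}{\ell_{k,i}}.
$$
Summing over $(k,i)$ produces a Riemann sum that approximates $\sum_{k=1}^{N}\int_{J_k}|dy|/\ell(z)$ up to an error controlled by the $m(A_{k,i})$; refining the partitions and then letting $\varepsilon\to 0$ finishes the proof.

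The main obstacle I anticipate is the construction in Step~2, that is, producing the rectangles $R_{k,i}$ cleanly in $\mathbb{D}$ while keeping $m(A_{k,i})$ small. One must avoid the (discrete) critical points of $\varphi$, deal with horizontal trajectories that may have one or both endpoints not in $[a,b]\cup[c,d]$, and accommodate the possibility that some boundary points of $\mathbb{D}$ lie at infinite $\varphi$-distance. The Strebel-type structure of horizontal trajectories of integrable quadratic differentials, as already exploited in \cite{HaSar1, HaSar2}, is precisely what makes such a decomposition possible.
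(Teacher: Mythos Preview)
Your strategy coincides with the paper's: decompose the horizontal trajectories joining $[a,b]$ to $[c,d]$ into strips, approximate each strip in the natural parameter by a finite union of rectangles, apply Lemma~\ref{lem:one_rectangle_approx} rectangle by rectangle, sum using disjointness, and pass to the limit. The paper merely organises this slightly differently, treating one strip $S_\omega$ at a time and summing over $\omega$ at the end.

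There is, however, a genuine gap in your Step~2. You invoke ``(uniform) continuity of $\ell$ on each $J_k$'', but the length function is \emph{not} continuous in general. In the natural parameter a strip $S_\omega$ becomes the region $\{h_2(y)<x<h_1(y),\ 0\le y\le s_0\}$, where $h_1$ is only lower semicontinuous, $h_2$ only upper semicontinuous, and either may take the value $\pm\infty$ (this is exactly the infinite-$\varphi$-distance phenomenon you flag at the end). Thus $\ell(y)=h_1(y)-h_2(y)$ is merely lower semicontinuous and possibly infinite, so the refinement ``on which $\ell$ is essentially constant'' cannot be produced by continuity alone, and the rectangles $R_{k,i}$ of horizontal extent $\ell_{k,i}$ need not exist as described.

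The paper fixes precisely this point with Lusin's theorem: approximate the bounded measurable functions $1/h_1$ and $1/h_2$ by continuous functions $1/g_n$, $1/f_n$ agreeing with them off a set $A_n$ with $m(A_n)\to 0$; build step functions $s_p^1\ge g_n$, $s_p^2\le f_n$ from the continuous approximants; apply Lemma~\ref{lem:one_rectangle_approx} to the resulting rectangles with exceptional sets $A_n\cap I_k$; then let $p\to\infty$ and $n\to\infty$. If you insert this Lusin step in place of the unjustified continuity claim, your outline becomes the paper's proof verbatim.
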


\begin{proof}
Consider the set of all horizontal trajectories of $\varphi$ that connect $[a,b]$ to $[c,d]$. They are divided into at most countably many horizontal strips $\{ S_{\omega}\}_{\omega =1}^{\infty}$ such that $S_{\omega}$ and $S_{\omega'}$ can have in common at most one of their boundary horizontal trajectories for $\omega\neq \omega'$. Let $\beta_{\omega}$ be a segment of the vertical trajectory that defines $S_\omega$ (See Strebel \cite{Strebel}). Denote by $\Gamma_\omega$ the family of curves in the strip $S_{\omega}$ that connects the vertical sides of $S_{\omega}$. Then $\Gamma_\omega\subset \Gamma_B$ and by monotonicity and  additivity
$$
\mathrm{mod}(f_{s+ti}(\Gamma_B))\geq\sum_{\omega}\mathrm{mod}(f_{s+ti}(\Gamma_{\omega})).
$$

We fix $\omega$ and estimate $\mathrm{mod}(f_{s+ti}(\Gamma_{\omega}))
$.
Let $D$ be the image of $S_\omega$ in the natural parameter such that $\beta_\omega$ is mapped onto the interval $[0,s_0]$ of the $y$-axis. 
Then there exist $h_1,h_2:[0,s_0]\to\mathbb{R}\cup\{-\infty ,\infty\}$ such that 
$$
D=\{x+yi\in \mathbb{C}\mid  0<y<s_0, h_2(y)<x<h_1(y)\},
$$
$h_1(y)>0>h_2(y)$ for all $y\in [0,s_0]$ and that $\liminf_{y\to y_0}h_1(y)\geq h_1(y_0)$ and $\limsup_{y\to y_0}h_2(y)\leq h_2(y_0)$ for all $y_0\in [0,s_0]$. The function $h_1$ is lower semicontinuous and the function $h_2$ is upper semicontinuous. Thus $h_1$ has a minimum $c_1>0$ and $h_2$ has a maximum $c_2<0$ on $[0,s_0]$.

 In particular, both $h_1:[0,s_0]\to [c_1,\infty ]$ and $h_2:[0,s_0]\to [-\infty ,c_2]$ are  Lebesgue measurable and thus so are $1/h_1:[0,s_0]\to [0,1/c_1]$ and $1/h_2:[0,s_0]\to [1/c_2,0]$.
 By a corollary to Lusin's theorem applied to $1/h_1$ and $1/h_2$ (See Rudin \cite[page 56]{Rudin}),  there exist sequences  of continuous functions $1/g_n:[0,s_0]\to [0,1/c_1]$ and
 $1/f_n:[0,s_0]\to [1/c_2,0]$ such that $1/g_n(y)\to 1/h_1(y)$ and $1/f_n(y)\to 1/h_2(y)$ as $n\to\infty$ for a.a. $y\in [0,s_0]$. Since $g_n(y)-f_n(y)\geq c_1-c_2$ (again by Lusin's theorem), Lebesgue Dominated Convergence Theorem implies that
 \begin{equation}
 \label{eqn:convergence_funct}
 \int_B\frac{dy}{g_n(y)-f_n(y)}\to\int_B\frac{dy}{h_1(y)-h_2(y)}
 \end{equation}
 for any Lebesgue measurable set $B\subset [0,s_0]$ as $n\to\infty$. 
 
We fix $n$ and divide the interval $[0,s_0]$ into $p$ subintervals $I_k=[\frac{k-1}{p}s_0,\frac{k}{p}s_0)$ for $k=1,2,\dots ,p$. Define $s_p^1(y)=\max_{I_k}g_n$ and $s_p^2(y)=\min_{I_k}f_n$ for $y\in I_k$. The two step functions satisfy $s_p^1\geq g_n$ and $s_p^2\leq f_n$ on $[0,s_0]$. Since $1/(g_n-f_n)$ is bounded and continuous on $[0,s_0]$, it follows that
$$
\int_{B}\frac{1}{s_p^1(y)-s_p^2(y)}dy\to \int_{B}\frac{1}{g_n(y)-f_n(y)}dy
$$
for any Lebesgue measurable $B\subset [0,s_0]$ as $p\to\infty$. 

Denote by $A_{n}\subset [0,s_0]$ the set of all $y$ such that either $g_n(y)\neq h_1(y)$ or $f_n(y)\neq h_2(y)$. The Lebesgue measure $m(A_{n})$ is going to zero as $n\to\infty$ by Lusin's theorem. 
Let $D_p$ denote the domain between the graphs of $s_p^1$ and $s_p^2$. 
Since $s_p^j$ for $j=1,2$ each have $p$ steps we conclude that $D_p$ is the union of $p$ rectangles $D_{p,k}$ for $k=1,2,\ldots ,p$ whose vertical sides are the steps of $s_p^j$ for $j=1,2$. Let $A_{n,k}=A_n\cap I_k$.

We consider the curve family $(\Gamma^{p,k}_{s+ti})^{\perp}$ that consists of all Euclidean segments orthogonal at both endpoints to the slanted sides of the parallelogram $f_{s+ti}(D_{p,k})$ such that the endpoints do not belong $f_{s+ti}[\{ (s_p^1(y),y)|y\in A_{n,k}\}\cup \{ (s_p^2(y),y)|y\in A_{n,k}\} ]$. 

By Lemma \ref{lem:one_rectangle_approx} we get that
$$
\liminf_{s+|t|\to\infty}\frac{1}{s+\frac{t^2}{s}}\mathrm{mod}((\Gamma^{p,k}_{s+ti})^{\perp})\geq
\frac{s_0-2m(A_{n,k})}{s_p^1(y_{k})-s_p^2(y_{k})}
$$
where $y_k\in I_k$ is arbitrary. Since $(\Gamma^{p,k}_{s+ti})^{\perp}$ for $k=1,2,\ldots ,p$ are pairwise disjoint and each curve in $(\Gamma^{p,k}_{s+ti})^{\perp}$ contains a curve in $f_{s+ti}(\Gamma_\omega)$,  we get
$$
\liminf_{s+|t|\to\infty}\frac{1}{s+\frac{t^2}{s}}\mathrm{mod}(f_{s+ti}(\Gamma_\omega))\geq\sum_{k=1}^p\frac{m(I_k)}{s_p^1(y_{k})-s_p^2(y_{k})}-2\sum_{k=1}^p\frac{m(A_{n,k})}{s_p^1(y_{k})-s_p^2(y_{k})}
$$
which gives
$$
\liminf_{s+|t|\to\infty}\frac{1}{s+\frac{t^2}{s}}\mathrm{mod}(f_{s+ti}(\Gamma_\omega))\geq\int_{[0,s_0]}\frac{dy}{s_p^1(y)-s_p^2(y)}-2\int_{A_n}\frac{dy}{s_p^1(y)-s_p^2(y)}.
$$
By letting $p\to\infty$ in the above inequality we obtain
$$
\liminf_{s+|t|\to\infty}\frac{1}{s+\frac{t^2}{s}}\mathrm{mod}(f_{s+ti}(\Gamma_\omega))\geq\int_{[0,s_0]}\frac{dy}{g_n(y)-f_n(y)}-2\int_{A_n}\frac{dy}{g_n(y)-f_n(y)}.
$$
Since $\int_{A_n}\frac{dy}{g_n(y)-f_n(y)}\geq \frac{m(A_n)}{c_2-c_1}\to 0$ as $n\to\infty$, by letting $n\to\infty$ the above inequality gives
$$
\liminf_{s+|t|\to\infty}\frac{1}{s+\frac{t^2}{s}}\mathrm{mod}(f_{s+ti}(\Gamma_\omega))\geq \int_{[0,s_0]}\frac{dy}{h_1(y)-h_2(y)}=\mu_{-\varphi}(\beta_{\omega}).
$$
The proposition follows by summing the above inequality over all $\omega$.
\end{proof}

Let $h_{\lambda}:S^1\to S^1$ be the quasisymmetric map representing $D_{\varphi}(\lambda )$.
Propositions \ref{prop:upper} and \ref{prop:lower} imply the projective weak* convergence of $\mathcal{L}(h_{\lambda})$ to $\mu_{e^{-i\theta}\varphi}$ as $\lambda\to e^{i\theta}$ and this finishes the proof of Theorem \ref{thm:rephrase-main-theorem}.

\subsection{Proof of Theorem \ref{thm:Teich-asymptotics}} If $B=[a,b]\times [c,d]$ is a box of geodesics, in the previous section we established that $\lim_{s+|t|\to\infty}\frac{1}{s+\frac{t^2}{s}}\mathrm{mod}(f_{s+ti}(\Gamma_B))=\mu_{-\varphi}(B)$. Let $A(z)=\frac{-z+1}{z+1}$ and $\lambda =A(s+ti)$. 
Note that $s+ti =\frac{-\lambda+1}{\lambda +1}$ and $s=\frac{1-|\lambda |^2}{|1-\lambda |^2}$.
Then $s+|t|\to\infty$ if and only if $\lambda\to -1$ and we have 
$$
\lim_{s+|t|\to\infty}\Big{[}\frac{s^2+t^2}{s}\Big{]}/\Big{[}\frac{2}{1-|\lambda |}\Big{]}=1. 
$$
Then Lemma \ref{lem:mod_liouville_measure} implies $\lim_{\lambda\to -1}\frac{1-|\lambda |}{2\pi}\mathcal{L}(h_{\lambda})=\mu_{-\varphi}$.

Let $B_{\theta}(z)=-e^{-i\theta}z$. Then we have
$$
D_{\varphi}(\lambda )=\left[\lambda\frac{\bar{\varphi}}{|\varphi |}\right]=\left[B_{\theta}(\lambda )\frac{-\overline{e^{-i\theta}\varphi}}{|\varphi |}\right].
$$
Note that $B_{\theta}(\lambda )\to -1$ if and only if $\lambda\to e^{i\theta}$. Then the above limit implies that
$\lim_{\lambda\to e^{i\theta}}\frac{1-|\lambda |}{2\pi}\mathcal{L}(h_{\lambda})=\mu_{e^{-i\theta}\varphi}$ for each $e^{i\theta}\in S^1$ and the proof is completed.

\end{document}